
\documentclass[11pt,twoside]{amsart}

\usepackage{latexsym}
\usepackage{amsmath}
\usepackage{amsthm}
\usepackage{amssymb}
\usepackage{vmargin}
\usepackage{amscd}
\usepackage{stmaryrd}
\usepackage{euscript}
\usepackage{mathrsfs}
\usepackage{amscd}
\usepackage[all]{xy}
\usepackage{xr}

\DeclareMathAlphabet{\mathpzc}{OT1}{pzc}{m}{it}



\externaldocument[drep-]{drep}

\newtheorem{theorem}{Theorem}[section]
\newtheorem{proposition}[theorem]{Proposition}

\newtheorem{lemma}[theorem]{Lemma}
\newtheorem*{theorem*}{Theorem}
\newtheorem*{proposition*}{Proposition}
\newtheorem*{corollary*}{Corollary}
\newtheorem*{lemma*}{Lemma}
\newtheorem*{conjecture*}{Conjecture}

\theoremstyle{definition}
\newtheorem{definition}[theorem]{Definition}

\newtheorem*{definition*}{Definition}

\theoremstyle{remark}

\newtheorem{remark}[theorem]{Remark}

\newtheorem*{example*}{Example}
\newtheorem*{examples*}{Examples}
\newtheorem*{remark*}{Remark}
\newtheorem*{remarks*}{Remarks}
\newtheorem*{exercise*}{Exercise}
\newtheorem*{property*}{Property}
\newtheorem*{properties*}{Properties}


\newcommand\ten{\otimes}

\renewcommand\H{\mathrm{H}}

\newcommand\Z{\mathbb{Z}}
\newcommand\Q{\mathbb{Q}}

\newcommand\bL{\mathbb{L}}

\newcommand\bS{\mathbb{S}}

\newcommand\C{\mathcal{C}}

\newcommand\cD{\mathcal{D}}

\newcommand\cM{\mathcal{M}}
\newcommand\cN{\mathcal{N}}

\newcommand\cS{\mathcal{S}}

\newcommand\cZ{\mathcal{Z}}

\renewcommand\O{\mathscr{O}}

\newcommand\sM{\mathscr{M}}

\newcommand\sT{\mathscr{T}}

\newcommand\fL{\mathfrak{L}}

\newcommand\fX{\mathfrak{X}}
\newcommand\fY{\mathfrak{Y}}
\newcommand\fZ{\mathfrak{Z}}

\newcommand\fLog{\mathfrak{Log}}

\newcommand\rC{\mathring{C}}

\newcommand\rU{\mathring{U}}
\newcommand\rV{\mathring{V}}

\newcommand\rX{\mathring{X}}

\newcommand\cHom{\mathcal{H}\!\mathit{om}}

\newcommand\Alg{\mathrm{Alg}}

\newcommand\Hom{\mathrm{Hom}}

\newcommand\map{\mathrm{map}}
\newcommand\HHom{\underline{\mathrm{Hom}}}

\newcommand\Spec{\mathrm{Spec}\,}

\newcommand\LLim{\varinjlim}

\newcommand\ho{\mathrm{ho}\!}

\newcommand\pr{\mathrm{pr}}

\newcommand\bt{\bullet}
\newcommand\by{\times}

\newcommand\diag{\mathrm{diag}\,}

\newcommand\gp{\mathrm{Gp}}

\newcommand\co{\colon\thinspace}

\newcommand\oR{\mathbf{R}}

\newcommand\oL{\mathbf{L}}

\newcommand\uleft\underleftarrow
\newcommand\uline\underline
\newcommand\uright\underrightarrow

\sloppy
\begin{document}

\begin{abstract}
In this note, we give a formulation of log structures for derived stacks using Olsson's log stack. 
 The derived cotangent complex is then Olsson's logarithmic cotangent complex, which (unlike Gabber's) is just given by log differential forms in the log smooth case. The derived moduli stack of log stable maps  then produces the desired  virtual tangent space and obstruction theory on the underlying underived stack.
\end{abstract}

 
\title{Derived log stacks after Olsson}
\author{J.P.Pridham}
\thanks{This work was supported by the Engineering and Physical Sciences Research Council [grant numbers  EP/F043570/1 and EP/I004130/1].}
\maketitle



\tableofcontents

\section{Olsson's log stacks} 

\begin{definition}
Recall from \cite[\S 2]{olssonlogcot} that the  algebraic stack $\fL^0$ classifies fine log structures. In other words, for any scheme $U$, $\fL^0(U)$ is the groupoid of fine log structures on $U$. 

The algebraic stack $\fL^1$ classifies morphisms of fine log structures, so $\fL^1(U)$ is the groupoid of pairs $(\sM_0, \sM_1)$ of log structures on $U$ with a morphism $\sM_0 \to \sM_1$. The algebraic stacks  $\fL^n$ classify n-strings of morphisms of fine log structures, so $\fL^n(U)$ is the groupoid of strings 
\[
 \sM_0 \to \sM_1 \to \sM_2 \to \ldots \to \sM_n.
\]

More generally, for a finite category $\Gamma$, the algebraic stack $\fL^{\Gamma}$ classifies $\Gamma$-diagrams of log structures.
\end{definition}

Beware that these stacks are neither quasi-compact nor quasi-separated.

\begin{definition}
For $0<j< n$, we define $\delta_j^* \co \fL^{n} \to \fL^{n-1}$ to be given by composition at the $j$th place, sending 
\[
 \sM_0 \to \sM_1 \to \sM_2 \to \ldots \to \sM_{n}
\]
to
\[
 \sM_0 \to  \ldots \to  \sM_{j-1} \to \sM_{j+1} \to \ldots \to \sM_{n}.
\]
The map $\delta_0^*$ is defined by forgetting $\sM_0\to$, and the map $ \delta_n^*$ by forgetting $\to \sM_n$.

When no map is specified, we adopt the convention that $ \fL^{n} \to \fL^{n-1}$ is understood to be the map $\delta_n^*$.
\end{definition}

\begin{definition}\label{fLogS}
Given a log-flat log scheme $S$, 
recall from \cite{olssonLog} that the  algebraic stack $\fLog_S$ classifies fine log schemes. In other words, for any affine scheme $U$, $\fLog_S(U)$ is the groupoid of fine log schemes over $S$ with underlying scheme $U$. 
\end{definition}

Writing $\mathring{S}$ for the scheme underlying a log scheme $S$, the log structure can be interpreted as a morphism $\mathring{S} \to \fL^0$, and then \cite[\S 2]{olssonlogcot} observes that 
\[
 \fLog_S= \fL^1\by_{\fL^0}\mathring{S},
\]
with a map $U \to \fLog_S$ corresponding to a morphism $f^*\sM_S \to \sM_U$ of log structures on $U$, for $f\co U \to \mathring{S}$.

\begin{lemma}\label{logsmooth}
 A morphism $X \to S$ of fine log schemes is log smooth if and only if $\rX \to \fLog_S$ is smooth.
\end{lemma}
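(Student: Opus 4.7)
The plan is to identify both sides with the same infinitesimal lifting property, using the description $\fLog_S = \fL^1 \times_{\fL^0} \mathring{S}$ from the preceding paragraph. Kato's definition of log smoothness of $f\co X \to S$ asks that $f$ be locally of finite presentation and that for every strict square-zero closed immersion $T_0 \hookrightarrow T$ of affine fine log schemes together with a commutative square
\[
\begin{array}{ccc} T_0 & \to & X \\ \downarrow & & \downarrow \\ T & \to & S, \end{array}
\]
there exist, étale-locally on $T$, a lift $T \to X$ over $S$. Smoothness of the representable morphism $\rX \to \fLog_S$ amounts, by the infinitesimal criterion (and since $\fLog_S$ is locally of finite presentation over $\mathring S$), to $\rX \to \mathring S$ being locally of finite presentation together with the analogous lifting property: for every square-zero thickening $\rT_0 \hookrightarrow \rT$ of affine schemes and every commutative square with $\rT_0 \to \rX$ and $\rT \to \fLog_S$, a lift $\rT \to \rX$ exists étale-locally on $\rT$.

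The next step is to match the two lifting problems via Definition~\ref{fLogS}. A morphism $\rT \to \fLog_S$ is exactly the datum promoting $\rT$ to a fine log scheme $T$ together with a morphism to $S$, and pulling the log structure back along $\rT_0 \hookrightarrow \rT$ yields a strict square-zero thickening $T_0 \hookrightarrow T$ of fine log schemes. Under this dictionary, a morphism $\rT_0 \to \rX$ compatible with the map to $\fLog_S$ corresponds to a morphism $T_0 \to X$ of fine log schemes over $S$, and a lift $\rT \to \rX$ compatible with both given data corresponds to a lift $T \to X$ of fine log schemes over $S$. Thus the two lifting conditions become verbatim identical, and the finite-presentation conditions coincide since $\fLog_S \to \mathring S$ is locally of finite presentation.

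The main point to nail down is therefore the verification that every strict square-zero thickening $T_0 \hookrightarrow T$ of affine fine log schemes over $S$ appearing in Kato's definition arises uniquely from a pair $(\rT \to \fLog_S,\ \rT_0 \hookrightarrow \rT)$; this is immediate from Definition~\ref{fLogS} together with the fact that the pullback of a fine log structure along a closed immersion is again fine. The match of "étale-local" clauses on both sides is automatic, since smoothness of a representable morphism of algebraic stacks and Kato's log smoothness both localise in the same étale topology on the source.
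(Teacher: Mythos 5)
There is a genuine gap, and it sits exactly at the step you describe as making the two lifting conditions ``verbatim identical.'' A $2$-commutative square with $\rT_0 \to \rX$ on top and $\rT \to \fLog_S$ on the bottom does \emph{not} correspond to an arbitrary morphism $T_0 \to X$ of fine log schemes over $S$: since $\fLog_S(\rT_0)$ is a groupoid, the $2$-isomorphism between the two composites $\rT_0 \to \fLog_S$ is an \emph{isomorphism} of log structures $g^*\sM_X \xrightarrow{\sim} \sM_{T_0}$ over $S$, i.e.\ the datum of a \emph{strict} morphism $T_0 \to X$ over $S$. Kato's infinitesimal criterion, by contrast, quantifies over all log morphisms $T_0 \to X$ (only the thickening $T_0 \hookrightarrow T$ is required to be strict/exact). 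So formal smoothness of $\rX \to \fLog_S$ is a priori the lifting property against strict test maps only, and your dictionary proves the wrong equivalence. Bridging the two --- e.g.\ showing that a non-strict $T_0 \to X$ with its log structure $h_0^*\sM_X$ can be fed into the stacky lifting problem --- would require extending the fine log structure $h_0^*\sM_X$ from $\rT_0$ to $\rT$ compatibly with the map to $S$, which is itself an obstructed deformation problem because $\fLog_S \to \mathring{S}$ is not smooth. This is not a formality: the discrepancy between the two lifting problems is essentially the same phenomenon as the discrepancy between Gabber's and Olsson's log cotangent complexes that the paper emphasises elsewhere.

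The paper does not attempt a proof at all: it cites \cite[Theorem 4.6]{olssonLog}, which is a substantive theorem. Olsson's argument does not proceed by matching infinitesimal criteria; it uses Kato's chart-theoretic characterisation of log smooth morphisms together with the local structure theory of $\fLog_S$ (the \'etale cover by the stacks $[\Spec \Z[P]/\Spec\Z[P^{\gp}]]$ recorded in Corollary 5.25 of that paper, which this note also quotes). If you want to keep your strategy, you would need to add the genuinely nontrivial step that the strict and non-strict lifting properties agree, or else follow Olsson's route through charts.
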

\begin{proof}
 This is part of \cite[Theorem 4.6]{olssonLog}.
\end{proof}

As we will only be interested in locally free log structures, we make the following definition:
\begin{definition}
 Define $\fL^n_f \subset \fL^n$ to be the open substack parametrising diagrams of locally free log structures.
\end{definition}
 
\begin{definition}
 Given a monoid $P$, we follow \cite{olssonLog} in writing $P^{\gp}$ for the group completion of $P$.
\end{definition}

\begin{proposition}
 The algebraic stacks $\fL^n_f $ are smooth for all $n$.
\end{proposition}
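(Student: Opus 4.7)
The strategy is to construct an explicit smooth atlas for $\fL^n_f$, generalising Olsson's chart construction for $\fL^0$. For the base case $n=0$, recall from \cite{olssonLog} that for each fine monoid $P$ the stack $\cL_P = [\Spec\Z[P]/\Spec\Z[P^{\gp}]]$ maps smoothly to $\fL^0$, and these cover $\fL^0$ as $P$ varies; when $P = \N^r$ this gives $\cL_{\N^r} = [\bA^r/\mathbb{G}_m^r]$, which is smooth, and as $r$ varies these jointly cover $\fL^0_f$.

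For general $n$, given a chain of free monoids
\[
\vec{P} = \bigl(\N^{r_0} \xra{A_1} \N^{r_1} \xra{A_2} \cdots \xra{A_n} \N^{r_n}\bigr),
\]
I would introduce the chain-chart stack $\cL_{\vec{P}}$ whose $T$-points are chains $\sM_0 \to \cdots \to \sM_n$ of fine log structures on $T$ together with compatible chart homomorphisms $\N^{r_i} \to \sM_i$ realising $\vec{P}$ on characteristic monoids. Since a chart amounts to a monoid map $\beta_i \co \N^{r_i} \to \cO_T$ and chain compatibility is precisely the relation $\beta_{i-1} = \beta_i \circ A_i$, the whole family is determined by the single datum $\beta_n$. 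Consequently
\[
\cL_{\vec{P}} \,\cong\, [\Spec\Z[\N^{r_n}] / \Spec\Z[\Z^{r_n}]] \,=\, [\bA^{r_n}/\mathbb{G}_m^{r_n}],
\]
which is smooth.

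It remains to verify that the forgetful maps $\cL_{\vec{P}} \to \fL^n_f$ are smooth and that, as $\vec{P}$ ranges over all chains of free monoids, they jointly cover $\fL^n_f$. Smoothness reduces to the $n=0$ case applied to $\sM_n$, since once the chart $\beta_n$ is fixed the rest of the chart data is automatic. The part most likely to require care is the covering statement: given a chain of locally free log structures on $T$, one must produce on a suitable étale neighbourhood a free chart $\N^{r_n} \to \sM_n$ whose induced morphisms of characteristic monoids match the given chain. I would establish this inductively, choosing $\beta_n$ using local freeness of $\sM_n$ and then using functoriality together with local freeness of the earlier $\sM_i$ to check that the compositions $\beta_n \circ A_n \circ \cdots \circ A_{i+1}$ yield valid charts for $\sM_i$ after possibly refining the étale neighbourhood.
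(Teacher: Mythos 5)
Your proposal is correct and follows essentially the same route as the paper: the atlas is the same coproduct of stacks $[\bA^{r_n}/\Gs_m^{r_n}]=[\Spec\Z[\N^{r_n}]/\Spec\Z[\Z^{r_n}]]$ indexed by chains of free monoids, resting on the same key observation that the chart stack for a chain depends only on the final monoid. The one step you leave as a sketch --- that these chart stacks map to $\fL^n_f$ by \'etale (a fortiori smooth) surjections --- is exactly what the paper extracts from Olsson's Corollary 5.25 applied inductively via the identification $\fL^n\simeq\fLog_{\fL^{n-1}}$, rather than re-proving the chart-existence statement for chains by hand.
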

\begin{proof}
By \cite[Corollary 5.25]{olssonLog}, the stack $\cS_P:= [S[P]/S[P^{\gp}]]$ has the property  that if $S$ has a local chart $Q \to \sM_S|_U$ on   $U$, then 
\[
 \coprod_{Q \to P} \cS_P\by_{\cS_Q}U\to \fLog_S
\]
is \'etale and surjective whenever $U\to \mathring{S}$ is so, where the coproduct is taken over morphisms of finitely generated fine saturated monoids.   

Writing $\cZ_P:= [\Spec \Z[P]/\Spec \Z[P^{\gp}]]$, it follows that  $\coprod_Q \cZ_Q\to \fL^0\simeq \fLog_{(\Spec\Z, \O^*)}$ is an \'etale surjection,  and (since $\fL^1\simeq \fLog_{\fL^0}$), it also follows that
\[
\coprod_{Q \to P} \cZ_P\to \fL^1
\]
is an \'etale surjection.

In general, we have $\fL^n\simeq \fLog_{\fL^{n-1}}$, giving us an \'etale surjection  
\[
 \coprod_{P_0 \to \ldots \to P_n} \cZ[P_n] \to \fL^n.
\]
If $P$ is a free monoid, then $\Spec \Z[P]$ is smooth and an atlas for $\cZ[P]$ (the latter following because $ \Spec \Z[P^{\gp}]$ is also smooth). Thus
\[
 \coprod_{P_0 \to \ldots \to P_n} \Spec \Z[P_n] \to \fL^n_f
\]
 is a smooth atlas, where the coproduct is now taken over morphisms of finitely generated free monoids $P_0, \ldots , P_n$.
\end{proof}

\begin{remark}\label{finalobjrmk}
 In fact, an easy generalisation of \cite[Corollary 5.25]{olssonLog} shows that if a finite category $\Gamma$ has final object $\gamma$, then there is an \'etale surjection
\[
 \coprod_{\uline{P}} \cZ_{\uline{P}(\gamma)}\to \fL^{\Gamma},
\]
where $\uline{P}$ runs over all functors from $\Gamma$ to finitely generated fine saturated monoids.
\end{remark}

\begin{definition}\label{fLognfdef}
 Given a log-flat log scheme $S$, define $\fLog^n_{S,f}:= \fL^{n+1}_f \by_{\fL^0_f}S$.
\end{definition}

\begin{lemma}\label{etalemaplemma}
 For $j <n $, the map $\delta_j^*\co \fL^n \to \fL^{n-1} $ is relatively Deligne--Mumford and \'etale.
\end{lemma}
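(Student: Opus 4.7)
My strategy is to reduce to the étale atlas constructed in the preceding proposition, exploiting the observation that $\delta_j^*$ preserves the final monoid in any atlas string whenever $j<n$. Since étaleness and the Deligne--Mumford property are étale-local on the target, it suffices to check that for each $(n-1)$-string $Q_\bt = (Q_0 \to \cdots \to Q_{n-1})$ of finitely generated fs monoids, the pullback $\cZ_{Q_{n-1}} \times_{\fL^{n-1}} \fL^n \to \cZ_{Q_{n-1}}$ is étale and representable by DM stacks.

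Applying the generalisation of \cite[Corollary 5.25]{olssonLog} recalled in Remark \ref{finalobjrmk} to this fibered stack produces an étale surjection from $\coprod_{P_\bt} \cZ_{P_n}$, where $P_\bt$ ranges over $n$-strings of finitely generated fs monoids with $\delta_j P_\bt = Q_\bt$. The key combinatorial point is that for $j<n$, any such $P_\bt$ satisfies $P_n = Q_{n-1}$: either we insert an arbitrary $P_j$ with $Q_{j-1} \to P_j \to Q_j$ (for $0<j<n$) with the remaining $P_i$ matching the appropriate $Q$'s, or (for $j=0$) we prepend an arbitrary $P_0$ mapping to $Q_0$ with $P_i = Q_{i-1}$ for $i \ge 1$. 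In both cases the last monoid is preserved, so each atlas component $\cZ_{P_n} = \cZ_{Q_{n-1}}$ maps to $\cZ_{Q_{n-1}}$ by the identity. The pullback therefore admits an étale cover by a disjoint union of identities, hence is étale over $\cZ_{Q_{n-1}}$. The diagonal of an étale morphism of algebraic stacks is automatically unramified, yielding relative DM-representability.

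The main subtlety is verifying that the construction of Remark \ref{finalobjrmk} genuinely applies to the fibered stack $\cZ_{Q_{n-1}} \times_{\fL^{n-1}} \fL^n$ — in other words, describing this stack as classifying insertions of a log structure at position $j$ compatible with the fixed surrounding $(n-1)$-string, and showing that such insertions étale-locally admit fs monoid charts $P_j$ compatible with the ambient chart $Q_{n-1}$. Granting this, the discreteness of the atlas indexing set reflects the essential reason $\delta_j^*$ is étale for $j<n$: once charts are fixed for the surrounding log structures, specifying the inserted factorisation is the combinatorial datum of a monoid map, with no continuous moduli.
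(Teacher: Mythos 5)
The paper itself does not prove this lemma: it simply cites \cite[Prop.\ 2.11]{olssonlogcot}, so your atlas-based argument is necessarily a different route. Its formal skeleton is sound: \'etaleness may be tested after pulling back along the \'etale surjection $\coprod_{Q_\bt}\cZ_{Q_{n-1}}\to\fL^{n-1}$, your combinatorial observation that $\delta_j$ preserves the final monoid of a string exactly when $j<n$ correctly isolates why the lemma excludes $\delta_n^*$, and the inference from ``$\coprod_{P_\bt}\cZ_{P_n}\to\cZ_{Q_{n-1}}$ is a disjoint union of identities'' to \'etaleness of $\delta_j^*$ is legitimate \emph{provided} that $\coprod_{P_\bt}\cZ_{P_n}\to\cZ_{Q_\bt}\by_{\fL^{n-1}}\fL^n$ (coproduct over $\delta_jP_\bt=Q_\bt$) really is an \'etale surjection.

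That proviso --- which you flag as ``the main subtlety'' and then grant --- is not a deferrable verification; it \emph{is} the lemma. Already for $n=1$, $j=0$ the granted statement reads: for a fine log structure $\sN$ with chart $Q$ and an arbitrary morphism $\sM\to\sN$ of fine log structures, there is \'etale locally a chart $P\to\sM$ and a monoid map $P\to Q$ compatible with everything, \emph{without refining the given chart} $Q$ (refinement is forbidden because the point of $\cZ_{Q_{n-1}}$ in the fibre product is fixed). This is precisely the assertion that $\delta_0^*\co\fL^1\to\fL^0$ is \'etale, rewritten in charted form, and Remark \ref{finalobjrmk} does not supply it: the stack $\cZ_{Q_\bt}\by_{\fL^{n-1}}\fL^n$ is not of the form $\fL^\Gamma$, and pulling the atlas of $\fL^n$ back to it yields components $\cZ_{P_n}\by_{\fL^{n-1}}\cZ_{Q_\bt}$ indexed by \emph{all} strings $P_\bt$, not the identity components you need. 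The standard way to close the gap --- essentially Olsson's argument --- is the observation that a morphism $\sM\to\sN$ of fine log structures induces an isomorphism $\sM\xra{\sim}\sN\by_{\overline{\sN}}\overline{\sM}$ onto the fibre product over the characteristic monoids $\overline{\sM}=\sM/\O^*$, so that the fibre of $\delta_j^*$ is equivalent to a groupoid of factorisations of a map of constructible sheaves of monoids (locally constant, hence \'etale, data); one must then also check that the resulting fibre products of fine monoids are again fine, which is where the integrality hypotheses enter. Without some such input the argument is incomplete at exactly its load-bearing step.
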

\begin{proof}
This is given in  \cite[Prop 2.11]{olssonlogcot}.
\end{proof}

In fact, Remark \ref{finalobjrmk} ensures that $\fL^{\Delta} \to \fL^{\Gamma}$ is relatively Deligne--Mumford and \'etale  and \'etale whenever $\Gamma$ and $\Delta$ both have final objects and the functor $\Gamma \to \Delta$ preserves the final object.

Beware that Lemma \ref{etalemaplemma} does not apply to $\delta_n^*$, which is the morphism we use most frequently.

\section{Log structures on derived stacks}

Denote the category of simplicial sets by $\bS$.

There is a natural embedding of the homotopy category of stacks into the homotopy category of derived stacks, and we essentially now just define a derived log stack $\fX$ to be  a derived stack $\mathring{\fX}$ equipped with a morphism to $\fL^0$. A morphism $\fX \to \fY$ of derived log stacks is then a commutative diagram
\[
 \begin{CD}
  \mathring{X} @>>> \fL^1 \\
@VVV @VVV\\
\mathring{Y} @>>> \fL^0,
 \end{CD}
\]
and so on.

However, for a singular stack, the associated derived stack tends to be fairly unnatural, so we restrict to the smooth stacks $\fL^n_f$ and consider only  locally free log structures on derived stacks. We now recall from \cite{drep} an explicit description of the derived stack associated to a smooth stack.

\begin{definition}
For a commutative ring $R$, define $s\cN_R^{\flat}$ to be the category of simplicial commutative  $R$-algebras $A$ for which 
\begin{enumerate}
 \item the map $A \to \pi_0A$ has nilpotent kernel;
\item the Dold--Kan normalisation $NA$ is bounded (i.e. $N_iA=0$ for all $i \gg 0$).
\end{enumerate}
\end{definition}

\begin{definition}
Given
a functor $F: \Alg_R \to \bS$ on $R$-algebras, we  define  a functor $\uline{F}: s\cN^{\flat}_R\to s\bS$ to  the category of bisimplicial sets by 
$$
\underline{F}(A)_{n} :=  F(A_n).
$$
\end{definition}
By \cite[Theorem \ref{drep-lurierep3}]{drep}, a derived geometric stack over $R$ is determined by its restriction to $s\cN^{\flat}_R$.

\begin{proposition}\label{ulines}
 Given a smooth algebraic stack $\fX$ over $R$, the associated derived stack  is given on $s\cN^{\flat}_R$ by the diagonal 
$\diag B\uline{\fX}$ of the nerve of $\uline{\fX}$. Explicitly,
\[
 \diag B\uline{\fX}(A)_n= (B_n \fX)(A_n).
\]
\end{proposition}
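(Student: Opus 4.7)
The plan is to identify the derived stack associated to $\fX$ with the homotopy colimit of its smooth nerve, and then to compute that homotopy colimit explicitly on $s\cN^{\flat}_R$ using the cited representability theorem. Concretely, a smooth atlas $X_0 \to \fX$ gives a smooth groupoid $X_1 \rightrightarrows X_0$ whose nerve $B_{\bullet}\fX$ is a simplicial smooth affine (or at worst, Zariski-locally affine) scheme over $R$. The derived enhancement of $\fX$ should then be the geometric realisation $\holim_{[n]\in\Delta^{\op}} \uline{B_n\fX}$ in derived stacks, where $\uline{B_n\fX}$ denotes the derived scheme associated to the smooth scheme $B_n\fX$, sending a simplicial algebra $A$ to the simplicial set with $k$-simplices $B_n\fX(A_k)$.

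First, I would record that for each smooth scheme $Y$ over $R$ and each $A \in s\cN^{\flat}_R$, the simplicial set $\uline{Y}(A)$ with $\uline{Y}(A)_k = Y(A_k)$ is already the derived stack associated to $Y$; this is standard and presumably appears in \cite{drep}. Applying this termwise to the nerve $B_{\bullet}\fX$ produces a bisimplicial set
\[
 [n],[k] \,\longmapsto\, (B_n\fX)(A_k),
\]
which is exactly $B\uline{\fX}(A)$ in our notation.

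Second, I would argue that the homotopy colimit of a simplicial diagram of simplicial sets is computed by the diagonal: this is the classical fact that $\holim_{\Delta^{\op}} Z_{\bullet,\bullet} \simeq \diag Z$ for a bisimplicial set $Z$. Combined with the previous step, this gives that the value at $A$ of the candidate derived stack $\holim_{[n]} \uline{B_n\fX}$ is precisely the simplicial set $n \mapsto (B_n\fX)(A_n)$ described in the statement.

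The main technical obstacle is matching this homotopy colimit with \emph{the} derived enhancement of $\fX$ in the sense of the embedding of ordinary stacks into derived stacks used in the paper. For this I would invoke \cite[Theorem \ref{drep-lurierep3}]{drep}: a derived geometric stack is determined by its restriction to $s\cN^{\flat}_R$, so it suffices to check that $\diag B\uline{\fX}$ is a derived geometric stack and restricts on discrete algebras to the underived $\fX$. Restriction to discrete $A$ collapses the diagonal to $(B_n\fX)(A)$, whose geometric realisation is $\fX(A)$, giving agreement on ordinary algebras; geometricity follows because each $\uline{B_n\fX}$ is a derived affine (or derived scheme) and the face maps in the smooth groupoid are smooth, so the diagonal satisfies the required iterated smooth atlas condition. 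Thus the uniqueness in \cite[Theorem \ref{drep-lurierep3}]{drep} identifies $\diag B\uline{\fX}$ with the associated derived stack of $\fX$, yielding the explicit formula.
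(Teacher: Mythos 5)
Your overall route --- present $\fX$ as the realisation of its smooth nerve, use smoothness of each $B_n\fX$ to compute its derived enhancement by the naive levelwise formula $A\mapsto B_n\fX(A_\bullet)$, and then use the Bousfield--Kan identification of the homotopy colimit of a bisimplicial set with its diagonal --- is a reasonable unpacking of what the paper leaves entirely to a citation (the paper's proof is one line, deferring to a representability/characterisation theorem in \cite{drep}; note it cites a different theorem from the one you invoke). Steps 1--3 are essentially the right argument, modulo the details you defer to \cite{drep}, and modulo the cosmetic point that what you write as $\holim_{\Delta^{\opp}}$ is a homotopy \emph{colimit}.

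The genuine gap is in your final identification step. You claim it suffices that $\diag B\uline{\fX}$ is geometric and restricts on discrete algebras to $\fX$, and that the uniqueness in the cited theorem then pins it down. That theorem says a derived geometric stack is determined by its restriction to $s\cN^{\flat}_R$ --- simplicial algebras with nilpotent, bounded structure --- not by its restriction to \emph{discrete} algebras. Determination by the discrete restriction is false in general: it is precisely the existence of nontrivial derived enhancements (e.g.\ $\oR\Spec$ of a simplicial algebra with $\pi_0=k$ but $\pi_1\neq 0$ is geometric with truncation $\Spec k$, yet is not the derived stack associated to $\Spec k$). So as written your uniqueness appeal proves nothing; the whole content of the proposition is the value on non-discrete $A\in s\cN^{\flat}_R$. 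The step can be repaired in either of two standard ways, both of which use smoothness a second time: (a) observe that the embedding of underived stacks into derived stacks is a left adjoint to truncation, hence preserves the colimit $\fX\simeq |B_\bullet\fX|$, reducing you to the smooth affine case already handled in your step 1; or (b) argue as the paper does later for $\oR\mathring{\cN}_{g,V}'$, showing the comparison map out of the naive prestack $A\mapsto B\fX(A_0)$ is an equivalence on $\pi^0$ \emph{and} on tangent complexes (this is where smoothness kills any hidden derived structure), which by the homogeneity-plus-obstruction machinery of \cite{drep} identifies the two stackifications. Equivalently, you could note that a derived geometric stack admitting a smooth atlas by underived smooth affines has no derived structure and is therefore the associated derived stack of its truncation --- but that statement, not bare uniqueness from the truncation, is what you need to invoke.
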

\begin{proof}
 This is a straightforward consequence of \cite[Theorem \ref{drep-lurieprerep}]{drep}.
\end{proof}

Note that $\diag B\uline{\fX}(A)$ is just a model for the homotopy colimit $\ho\LLim_n B\fX(A_n)$, so we could use other model (such as $\bar{W}B\uline{\fX}(A)$ for the codiagonal $\bar{W}$ of \cite{CRdiag}) for the homotopy colimit. 

In the setting of \cite{hag2}, we can form the simplicial presheaf $B\fX$ on $s\cN^{\flat}_R$ by $B\fX(A):= B\fX(A_0)$, and then Proposition \ref{ulines} says that  $\diag B\uline{\fX}$ is the $\infty$-stackification $(B\fX)^{\sharp} $ of  $B\fX$.

\begin{definition}
 For a commutative $\Q$-algebra $R$, define $dg_+\cN_R^{\flat}$ to be the category of differential graded-commutative $R$-algebras in non-negative chain degrees 
for which 
\begin{enumerate}
 \item the map $A_0 \to \H_0A$ has nilpotent kernel;
\item $A$ is bounded (i.e. $A_i=0$ for all $i \gg 0$).
\end{enumerate}
\end{definition}

\begin{definition}
 Set $\Omega_n=\Omega(\Delta^n)$ to be the graded-commutative  chain algebra 
$$
\Q[t_0, t_1, \ldots, t_n,dt_0, dt_1, \ldots, dt_n ]/(\sum t_i -1, \sum dt_i)
$$  
of rational differential forms on the $n$-simplex $\Delta^n$, with $t_i$ in degree $0$.
 These fit together to form a simplicial diagram $\Omega_{\bt}$ of chain algebras, and for a chain algebra $A$, we define $A^{\Delta^n}$ as the good truncation $A^{\Delta^n}:= \tau_{\ge 0}(A \ten \Omega_n)$.
\end{definition}

\begin{definition}
Given
a functor $F: \Alg_R \to \bS$, we  define  a functor $\uline{F}: dg_+\cN^{\flat}\to s\bS$ to  the category of bisimplicial sets by 
$$
\uline{F}(A)_{n} :=  F((A^{\Delta^n})_0).
$$
\end{definition}

By \cite[Theorem \ref{drep-lurierep3}]{drep}, a derived geometric stack over a $\Q$-algebra $R$ is determined by its restriction to $dg_+\cN^{\flat}_R$.
\begin{proposition}\label{ulinedg}
 Given a smooth algebraic stack $\fX$ over a $\Q$-algebra $R$, the associated derived stack is given on $dg_+\cN^{\flat}_R$ by  $\diag B\uline{\fX}$  (or equivalently $\bar{W}B\uline{\fX}$). 
\end{proposition}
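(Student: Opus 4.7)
The plan is to reduce to Proposition \ref{ulines} by exploiting the standard equivalence, over a $\Q$-algebra $R$, between simplicial commutative $R$-algebras and non-negatively graded chain commutative $R$-algebras. The functor $A \mapsto (A^{\Delta^\bullet})_0$ from chain algebras to simplicial algebras is a model for the quasi-inverse to Dold--Kan normalisation, it sends $dg_+\cN^{\flat}_R$ into $s\cN^{\flat}_R$, and by \cite[Theorem \ref{drep-lurierep3}]{drep} it is precisely the comparison functor used to transport the restriction of a derived geometric stack from $s\cN^{\flat}_R$ to $dg_+\cN^{\flat}_R$. So the value of the derived stack on $A \in dg_+\cN^{\flat}_R$ is the value of its simplicial restriction on $\tilde A_\bullet := (A^{\Delta^\bullet})_0$.

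The key computation to carry out is then to check that the construction $\diag B\uline{\fX}$ is compatible with this transport. Unwinding the two definitions of $\uline{\fX}$, one has $\uline{\fX}(A)_n = \fX((A^{\Delta^n})_0) = \fX(\tilde A_n)$, and hence
\[
 (\diag B\uline{\fX})(A)_n = (B_n\fX)((A^{\Delta^n})_0) = (B_n\fX)(\tilde A_n).
\]
The right-hand side is, by construction, the value at $\tilde A$ of the bisimplicial model appearing in Proposition \ref{ulines}. Thus the dg-formula for $\diag B\uline{\fX}$ is literally the composition of the simplicial formula with the functor $A \mapsto \tilde A$.

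Combining the two observations, Proposition \ref{ulines} identifies $(\diag B\uline{\fX})(\tilde A)$ with the associated derived stack evaluated at $\tilde A$, which by the first paragraph equals the value of the derived stack on $dg_+\cN^{\flat}_R$ at $A$. The equivalence $\diag B\uline{\fX} \simeq \bar W B\uline{\fX}$ is then general bisimplicial nonsense: both compute the homotopy colimit over $\Delta^{\op}$ of $B\uline{\fX}$, as explained in \cite{CRdiag}.

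The main obstacle I anticipate is a careful verification that the functor $A \mapsto (A^{\Delta^\bullet})_0$ really is the comparison implicit in \cite[Theorem \ref{drep-lurierep3}]{drep}, and that it preserves the finiteness and nilpotence conditions defining $dg_+\cN^{\flat}_R$ and $s\cN^{\flat}_R$; in particular that $\tau_{\ge 0}(A \ten \Omega_n)$ in degree $0$ lies in $s\cN^{\flat}_R$ whenever $A \in dg_+\cN^{\flat}_R$, and that this comparison descends to an equivalence of derived stacks over the two diagrams. Once this compatibility is in hand, the identification of the dg-formula with the simplicial one is forced by the definitions.
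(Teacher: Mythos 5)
Your route is genuinely different from the paper's: the paper does not reduce the dg case to Proposition \ref{ulines}, but instead applies \cite[Theorem \ref{drep-lurieprerep}]{drep} directly on $dg_+\cN^{\flat}_R$, exactly as in the simplicial case. Your tautological identity $(\diag B\uline{\fX})(A)_n=(B_n\fX)((A^{\Delta^n})_0)$ is correct, and a reduction along $A\mapsto \tilde A_{\bullet}:=(A^{\Delta^{\bullet}})_0$ is a reasonable alternative strategy, but as written it has two gaps.

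First, the claim that $A\mapsto\tilde A$ sends $dg_+\cN^{\flat}_R$ into $s\cN^{\flat}_R$ is false: the boundedness condition on the normalisation fails. Already for $A=\Q[x]/(x^2)$ with $x$ in chain degree $1$ and $d=0$, one computes $\tilde A_n=\Q\oplus x\cdot d\Omega_n^0$ (the degree-$0$ cycles of $A\ten\Omega_n$), and $N_n\tilde A\neq 0$ for every $n$, since for instance $x\cdot d(t_1\cdots t_n)$ lies in $\bigcap_{i\ge 1}\ker\partial_i$. So $\tilde A$ lies in $s\cN^{\flat}_R$ only up to levelwise weak equivalence (it is weakly equivalent to the Eilenberg--Zilber denormalisation of $A$), and you must invoke homotopy-invariance of the derived stack to replace it by an object of $s\cN^{\flat}_R$ before Proposition \ref{ulines} can be quoted. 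Second, and more seriously, the assertion that the derived stack's value at $A$ equals its simplicial restriction's value at $\tilde A$ cannot be read off from the determination statement of \cite[Theorem \ref{drep-lurierep3}]{drep}; it is essentially the content of the proposition itself, since the dg model $\uline{\fX}$ is \emph{defined} by this formula. You flag this as the ``main obstacle'', but without it the argument is circular: everything else in your proof is a restatement of definitions. To close the gap you would need to show that $A\mapsto\tilde A$ preserves weak equivalences and the homotopy-homogeneity/fibre-product structures underlying the two representability settings of \cite{drep}, and that it implements the equivalence between them --- which is where all the work lies --- or else follow the paper and verify the hypotheses of \cite[Theorem \ref{drep-lurieprerep}]{drep} directly in the dg setting.
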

\begin{proof}
 Again, this is a straightforward consequence of \cite[Theorem \ref{drep-lurieprerep}]{drep}.
\end{proof}

\begin{definition}\label{pi0def}
 Given a derived $\infty$-stack $F \co dg_+\cN^{\flat}_R \to \bS$ or $F \co s\cN^{\flat}_R \to \bS$, define the underlying underived  $\infty$-stack $\pi^0F\co \Alg_R \to \bS$ by $\pi^0F(A):= F(A)$.
\end{definition}

\begin{definition}
 Define a (locally free) log structure on a derived $\infty$-geometric stack $\fX$ to be a morphism $\fX \to \bar{W} B\uline{\fL^0_f}$ of derived log stacks. A morphism of  log structures on a derived $n$-geometric stack $\fX$  is then defined to be a morphism $\fX \to \bar{W}B\uline{\fL^1_f}$.
\end{definition}
From now on, any log structure on a derived $\infty$-geometric stack is understood to be locally free.

\begin{remark}\label{inftycatrmk}
A homotopy class of log structures on a derived $\infty$-geometric stack $\fX$ will be a homotopy class of morphisms $\fX \to \bar{W}B\uline{\fL^0_f}$, but we should really regard the simplicial set
\[
 \HHom(\fX, \bar{W}B\uline{\fL^0_f} )\simeq \map(\fX,B\fL^0_f )
\]
as the $\infty$-groupoid of locally free log structures on $\fX$, so $\pi_0$ gives the homotopy classes of objects,   $\pi_1$ the homotopy classes of automorphisms, and so on. 

Of course, we are interested in all morphisms of log structures, not just automorphisms, so we should look at the bisimplicial set
\[
 n \mapsto  \HHom(\fX, \bar{W}B\uline{\fL^n_f}),
\]
which is a complete Segal space (in the sense of \cite{rezk}) and hence a model for an $(\infty,1)$-category. This gives us an object we can regard as the $\infty$-category of log structures on $\fX$. The reason the Segal space is complete is that 
the full substack of $\fL^1$ parametrising isomorphisms of log structures is equivalent to $\fL^0$.

In order to understand the difference between this $\infty$-category and the $\infty$-category of log structures considered in \cite{SagaveSchuergVezzosi} (which was based on Gabber's log cotangent complex), we can look at connected components of the $\infty$-groupoid of log structures on a derived affine scheme. For $A \in s\cN^{\flat}_{\Z}$, a homotopy class of log structures on $\oR \Spec A$ is an element of
the set  $\pi_0\uline{\fL^0_f}(A)$. This is given by a log structure on $A_0$ modulo isomorphisms and homotopies coming from log structures on $A_1$. In the setting of \cite{SagaveSchuergVezzosi}, a homotopy class of log structures on $\oR \Spec A$ is given by a simplicial log structure on $A$, modulo isomorphisms and simplicial log structures on $A^{\Delta^1}$.
\end{remark}

\begin{definition}
Write $\oR\fL^n$ for the derived stackification $\bar{W}B\uline{\fL^n_f}$ of the smooth stack $\fL^n_f$.

Given a derived log stack $V$, define $\oR\fLog_{V}$ to be the homotopy fibre product $\oR\fLog_{V}:= \mathring{V}\by^h_{\oR\fL^0_f}\oR\fL^1_f$. Note that when $V$ is a log-flat underived stack, this agrees with Definition \ref{fLogS},  since $\oR\fLog_{V} $ is just the derived stackification of $\fLog_{V,f}$.  Likewise, define $\oR\fLog^n_{V}:= \mathring{V}\by^h_{\oR\fL^0_f}\oR\fL^{n+1}_f$.
\end{definition}
Thus working with derived stacks is one way to resolve the base change issues arising for Olsson's log stack when the base is not log-flat.

\begin{definition}
 Given a morphism $\fX \to \fY$ of derived  $\infty$-geometric log stacks, write
\[
 \bL^{\fX/\fY}:= \bL^{\mathring{\fX}/\oR\fLog_{\fY}}
\]
for the logarithmic cotangent complex. Note that this agrees with \cite{olssonlogcot} when $\fX$ and $\fY$ are both log-flat underived log stacks.
\end{definition}

\begin{remark}\label{triangle}
By Lemma \ref{etalemaplemma}, the map $\delta_0^* \co \fLog^1_{\fY} \to \fLog_{\fY}$ is \'etale, so  
\[
 \bL^{\oR\fLog_{X}/ \oR\fLog_{\fY}} \simeq \bL^{\oR\fLog_{X}/ \oR\fLog_{\fY}^1},
\]
which is the pullback along $\fLog_{\fY}^1 \to \fLog_{\fY}^0$ of $\bL^{\fX/\fY}$. 

Thus for any diagram $\fX \to \fY \to \fZ $ of derived $\infty$-geometric log stacks,  the diagram $\mathring{\fX} \to \oR\fLog_{\fY} \to \oR\fLog_{\fZ}$  gives a distinguished  triangle
\[
\oL f^*\bL^{\fY/\fZ}\to \bL^{\fX/\fZ} \to \bL^{\fX/\fY}\to \oL f^*\bL^{\fY/\fZ}[1]. 
\]
\end{remark}

\section{Logarithmic derived moduli stacks}

We now fix a base log stack $V$, 
 with locally free log structure, and consider moduli of log curves over $V$, and then moduli of log maps from log curves to a fixed target, as features in the study of stable maps. The universal case takes $\mathring{V} = \fL^0_f$, with log structure given by the identity map $\fL^0_f \to \fL^0_f$. 

\subsection{Moduli of log curves}

\subsubsection{The underived moduli stack}

We first consider moduli of log curves. For any affine log scheme $U$ over $V$ with locally free log structure, we wish to parametrise proper log-smooth log curves over $U$. Now, an affine log scheme $U$ over $V$ is the same as an affine scheme $\rU$ equipped with a morphism
\[
 \rU \to \fLog_{V,f}.
\]
Given such data, a  log-smooth log curve over $U$ with locally free log structure is smooth morphism 
\[
 \rC \to \fLog_{U,f}
\]
of relative dimension $1$. 

If $\cM_g'$ denotes the (non-algebraic) moduli stack of smooth, not necessarily proper, genus $g$ curves, then the groupoid of log curves above is equivalent to the groupoid of stack homomorphisms
\[
 \fLog_{U,f} \to \cM_g'.
\]
Since $\fLog_{U,f}= \fLog^1_{V,f}\by_{\fLog_{V,f}}\rU$, this is the same as a map from $\rU$ to the $\Hom$-stack
\[
 \mathring{\cN}_{g,V}':=\cHom_{\fLog_{V,f}}( \fLog^1_{V,f}, \cM_g' \by \fLog_{V,f}).
\]
Projection to $\fLog_{V,f} $ gives us a log stack $ \cN_{g,V}'$ with underlying stack $\mathring{\cN}_{g,V}'$.

\begin{remark}
In what follows, the only properties of families  $f \co C \to S$ of curves which we will use  are smoothness of $f$ and that $\oR f_*$ maps quasi-coherent sheaves to complexes of length $1$. Thus everything can be generalised to stacky curves.
\end{remark}

We now define
\[
 \mathring{\cN}_{g,V} \subset \mathring{\cN}_{g,V}'
\]
to consist of those $\rC \to \fLog_{U,f}$ for which $\rC \to \rU$ is proper. Because properness is deformation invariant, this is an open substack.  Note that this substack is not the same as
\[
 \cHom_{\fLog_{V,f}}( \fLog^1_{V,f}, \cM_g \by \fLog_{V,f}),
\]
because properness of $\rC \to \rU $ does not imply properness of $ \rC \to \fLog_{U,f}$.  Write $\cN_{g,V} $ for the log stack over $V$ given by the morphism $\mathring{\cN}_{g,V} \to \fLog_{V,f}$. 

We simply write $\cN_g:= \cN_{g,\fL^0_f}$ when working with the universal base $\fL^0_f$. Note that   $\mathring{\cN}_{g,V}= \mathring{\cN}_{g}\by_{\fL^0_f}V$.

Since stability is invariant under deformations,  the log  moduli stack of stable log curves over $V$  forms an open substack of $\cN_{g,V} $. We make no claims about algebraicity of this stack, but note that formal smoothness of $\cM_g'$
ensures that the morphism
\[
 \mathring{\cN}_{g,V}' \to \fLog_{V,f}
\]
 is formally smooth, hence  $\mathring{\cN}_{g,V} \to \fLog_{V,f}$ is also formally smooth. 

Moreover, the stacks $\mathring{\cN}_g'$ and $\mathring{\cN}_g $ are homogeneous. This means that
for any nilpotent extension $A \to B$ of rings, and any morphism $C \to B$,  the maps
\[
 \mathring{\cN}_g'(A\by_BC) \to \mathring{\cN}_g'(A)\by_{\mathring{\cN}_g'(B)}\mathring{\cN}_g'(C), \quad \mathring{\cN}_g(A\by_BC) \to \mathring{\cN}_g(A)\by_{\mathring{\cN}_g(B)}\mathring{\cN}_g(C)
\]
are equivalences of groupoids. 

For any open substack of $\mathring{\cN}_g$ (and in particular for the moduli stack of log stable curves), this means that satisfying Artin's conditions for algebraicity is just a matter of verifying formal existence, local finite presentation and finiteness of tangent spaces.

\subsubsection{The universal curve}

A map $\rU \to \mathring{\cN}_g'$ necessarily gives rise to a log curve $C$ over $U$, so there is a universal such curve over $\mathring{\cN}_g'$.
If we write $\C_g' \to \cM_g'$ for the universal curve over $\cM_g'$, then  $\rC$ is formed as the pullback
\[
 \begin{CD}
 @. \rC @>>> \C_g' \\
@. @VVV @VVV \\
\fLog^1_{V,f}\by_{\fLog_{V,f}} \rU @=\fLog_{U,f} @>>> \cM_g'.
 \end{CD}
\]

The universal case is given by taking $\rU= \mathring{\cN}_{g,V}'$, giving a universal curve $\mathring{\cD}_{g,V}'$ as the pullback
\[
\begin{CD}
  \mathring{\cD}_{g,V}' @>>> \C_g' \\
@VVV @VVV \\
\fLog^1_{V,f}\by_{\fLog_{V,f}} \mathring{\cN}_{g,V}' @>>> \cM_g',
 \end{CD}
\]
with log stack $\cD_{g,V}'$ over $\cN_{g,V}'$ given by the map from $\mathring{\cD}_{g,V}'$ to  
\[
 \fLog_{ \mathring{\cN}_{g,V}',f}= \fLog^1_{V,f}\by_{\fLog_{V,f}}\mathring{\cN}_{g,V}'= \fLog^1_{V,f}\by_{\fLog_{V,f}} \cHom_{\fLog_{V,f}}( \fLog^1_{V,f}, \cM_g' \by \fLog_{V,f}).
\]
We then construct the universal curve $\cD_{g,V}$ over $\cN_{g,V}$ as the pullback
\[
 \cD_{g,V}:= \cD_{g,V}'\by_{ \cN_{g,V}'}\cN_{g,V}.
\]

\subsubsection{Derived moduli and the tangent space}

We may also construct a derived moduli stack by the same method. Since $\cM'_g$ is a formally smooth stack, it gives rise to a derived stack $\oR \cM_g':=\bar{W}B\uline{\cM'}_g$ defined as in Proposition \ref{ulines} or \ref{ulinedg}.

We then set
\[
 \oR\mathring{\cN}_{g,V}':=\oR\cHom_{\oR\fLog_{V}}( \oR\fLog^1_{V}, \oR\cM_g' \by \oR\fLog_{V}),
\]
which is a derived $\Hom$-stack with underlying stack $\pi^0\oR\mathring{\cN}_{g,V}'= B\mathring{\cN}_{g,V}'$.
Then take $\oR\mathring{\cN}_{g,V} \subset \oR\mathring{\cN}_{g,V}' $ to be the open derived substack on the underlying stack $B\mathring{\cN}_{g,V}$. In other words
\[
 \oR\mathring{\cN}_{g,V}(A):=  \oR\mathring{\cN}_{g,V}'(A) \by^h_{B\mathring{\cN}_{g,V}'(\H_0A) }B\mathring{\cN}_{g,V}(\H_0A).
\]

For open substacks $\fX$ of $\mathring{\cN}_{g,V}$, such as derived moduli of stable log curves, this also allows us to define a derived stack $\oR \fX$ by the same formula.

For a derived affine scheme $U$ over $\oR\fLog_{V}$, we can reinterpret  $\oR\mathring{\cN}_{g,V}'(U)$  as the $\infty$-groupoid of smooth genus $g$ curves  $\rC \to \oR\fLog_{U}$.  This follows essentially because $\oR\cM_g'$ is the derived moduli stack of smooth genus $g$ curves, since  the cotangent complex of a smooth morphism has no higher terms and a curve has no cohomology above degree $1$ (see \cite[\S 8]{lurie} for details). 

\begin{proposition}
The derived stacks  $\oR\mathring{\cN}_{g,V}'$ and $\bar{W}B\uline{\mathring{\cN}}_{g,V}'$ are equivalent, as are $\oR\mathring{\cN}_{g,V}$ and $\bar{W}B\uline{\mathring{\cN}}_{g,V}$.
\end{proposition}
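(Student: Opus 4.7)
The first equivalence implies the second by openness: since $\mathring{\cN}_{g,V} \hookrightarrow \mathring{\cN}_{g,V}'$ is open, both sides of the second equivalence are defined as the same homotopy pullback of the corresponding sides of the first along $B\mathring{\cN}_{g,V}(\H_0 A) \hookrightarrow B\mathring{\cN}_{g,V}'(\H_0 A)$, and this pullback commutes with both the derived Hom construction and with $\bar{W}B\uline{(-)}$.

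For the first equivalence, the plan is to evaluate both sides on any $A \in s\cN^\flat_R$ (or $dg_+\cN^\flat_R$) equipped with a map to $\oR\fLog_V$ and match them level-wise in the simplicial direction of $A$. By Propositions \ref{ulines} and \ref{ulinedg}, the right-hand side is $\diag_n B_\bullet \mathring{\cN}_{g,V}'(A_n)$. For the left-hand side, unfolding the derived Hom stack gives
\[
\oR\mathring{\cN}_{g,V}'(A) \;\simeq\; \Map_{\oR\fLog_V}\!\bigl(\oR\fLog^1_V \by^h_{\oR\fLog_V} \oR\Spec A,\; \oR\cM_g'\bigr),
\]
and since $\oR\cM_g'$, $\oR\fLog^1_V$, $\oR\fLog_V$ are all of the form $\bar{W}B\uline{(-)}$ applied to smooth underived stacks, one reduces this mapping space to
\[
\diag_n \Map_{\fLog_{V,f}(A_n)}\!\bigl(\fLog^1_{V,f}(A_n),\; \cM_g'(A_n)\bigr) \;=\; \diag_n \mathring{\cN}_{g,V}'(A_n),
\]
which is exactly the right-hand side.

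The main obstacle is the compatibility of $\bar{W}B\uline{(-)}$ with Hom-stack formation: that the derived internal Hom between derived stacks of the form $\bar{W}B\uline{(-)}$ applied to smooth underived stacks coincides with $\bar{W}B\uline{(-)}$ applied to the underived Hom-stack. Formally this reflects the right-Kan-extension nature of the construction, and concretely it is here that the formal smoothness of $\mathring{\cN}_{g,V}' \to \fLog_{V,f}$ (noted in the text, coming from formal smoothness of $\cM_g'$) enters, ensuring that no unexpected derived corrections appear in the simplicial direction of $A$ and that the mapping space really is the diagonal of its level-wise values.
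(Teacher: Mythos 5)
Your reduction of the second equivalence to the first via openness is fine, and matches what the paper implicitly does (the paper handles both stacks at once by noting they are formally smooth and homogeneous, so that Propositions \ref{ulines} and \ref{ulinedg} identify $\bar{W}B\uline{\mathring{\cN}}_{g,V}'$ and $\bar{W}B\uline{\mathring{\cN}}_{g,V}$ as derived stackifications of $B\mathring{\cN}_{g,V}'$ and $B\mathring{\cN}_{g,V}$).

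The problem is your treatment of the first equivalence. The step where you ``reduce'' the derived mapping space to $\diag_n \Map_{\fLog_{V,f}(A_n)}(\fLog^1_{V,f}(A_n), \cM_g'(A_n)) = \diag_n \mathring{\cN}_{g,V}'(A_n)$ is not a reduction at all: it is a restatement of the proposition. Derived $\Hom$-stacks do not commute with level-wise evaluation and diagonals, even when source and target are stackifications of smooth underived stacks --- the whole point of $\oR\cHom$ is that its tangent complex is $\oR\HHom(\bL,-)$, which in general acquires positive-degree (obstruction) terms invisible to any level-wise formula. You correctly flag this as ``the main obstacle,'' but the justification you offer does not close it: the ``right-Kan-extension nature of the construction'' is not an argument, and formal smoothness of $\mathring{\cN}_{g,V}' \to \fLog_{V,f}$ by itself does not rule out derived corrections. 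What is actually needed, and what the paper supplies, is a comparison of derived tangent complexes: the map $B\mathring{\cN}_{g,V}' \to \oR\mathring{\cN}_{g,V}'$ is an isomorphism on $\pi^0$, so by the criteria of \cite{drep} it suffices to check it is an isomorphism on tangent complexes. Both sides compute to $\oR\HHom_{\O_{\rC}}(\Omega(\rC/\fLog_U), \O_{\rC}\ten_AM)[1]$ --- the underived side by formal smoothness of $\cM_g'$ and adjunction, the derived side by the standard formula for mapping stacks --- and the reason no derived corrections appear is that $\oR\Gamma$ of a curve is concentrated in cohomological degrees $0$ and $1$, so after the shift $[1]$ the tangent complex has no positive-degree part. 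Without this cohomological-dimension input (or some equivalent obstruction-vanishing statement), your level-wise identification is circular.
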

\begin{proof}
Since $\mathring{\cN}_{g,V}'$ and $\mathring{\cN}_{g,V} $ are formally smooth and homogeneous, the conclusion of Propositions \ref{ulines} and \ref{ulinedg} still applies, meaning that $\bar{W}B\uline{\mathring{\cN}}_{g,V}'$ and $\bar{W}\uline{\mathring{\cN}}_{g,V}$ are the derived $\infty$-stackifications of $B \mathring{\cN}_{g,V}'$ and $ B \mathring{\cN}_{g,V}$ respectively.

It therefore suffices to show that the map
\[
 B \mathring{\cN}_{g,V}' \to \oR\mathring{\cN}_{g,V}'
\]
is another choice of derived $\infty$-stackification. Since the map is an isomorphism on $\pi^0$, it suffices by \cite[Proposition \ref{drep-obs} and Lemma \ref{drep-totcohoc}]{drep} to show that this induces an isomorphism on derived tangent complexes.

At a point $[C] \in \mathring{\cN}_{g,V}'(U)$ corresponding to a log curve $C \to U$, formal smoothness, adjunction and properties of $\cM_g'$ show that the tangent complex of $\mathring{\cN}_{g,V}'$ over $\fLog_V$ is given by
\[
 T_{[C]}( \mathring{\cN}_{g,V}'/ \fLog_V,M)= \oR\HHom_{\O_{\rC}}(\Omega(\rC/\fLog_U), \O_{\rC}\ten_AM)[1],
\]
where $U=\Spec A$ and $M$ is an $A$-module. 

Calculation of the derived tangent complex of $\oR\mathring{\cN}_{g,V}'$ over $\oR\fLog_V$  gives the same answer, essentially because $\cM_g'$ is formally smooth.
\end{proof}

Note that the logarithmic tangent complex $ T_{[C]}( \oR\cN_{g,V}'/V,M)$ can be rewritten as
\[
 T_{[C]}( \oR\mathring{\cN}_{g,V}'/ \oR\fLog_V,M)\simeq \oR\HHom_{\O_{\rC}}(\Omega^{\log}_{C/U}, \O_{\rC}\ten_AM)[1]=\oR\Gamma(\rC, \sT^{\log}_{C/U}\ten_AM)[1].
\]
Beware that this would not be true  for a derived moduli functor based on the approach of \cite{SagaveSchuergVezzosi}, since in that setting the tangent complex would be 
\[
\oR\HHom_{\O_{\rC}}(\bL^G_{C/U}, \O_{\rC}\ten_AM)[1], 
\]
for Gabber's cotangent complex $\bL^G$ as in \cite[\S 8]{olssonlogcot}, and in general $\bL^G_{C/U} \to  \Omega^{\log}_{C/U}$ is not a quasi-isomorphism for log-smooth morphisms $C \to U$.

\subsection{Moduli of log maps}

\subsubsection{The underived  moduli stack}

As in \cite[Definition 3.5.1]{AbramovichMarcusWise}, we now fix a log-flat log stack $X$ over our base log stack $V$, and  consider commutative diagrams
\[
 \begin{CD} 
  C @>>> X \\
@VVV @VVV\\
U@>>> V.     
 \end{CD}
\]
of log stacks as a functor in $U$, where $C \to U$ ranges over proper log-smooth log curves of genus $g$. We will take all the log structures to be locally free. In particular, note that this holds for both the types of morphisms $X \to V$ considered in \cite[\S 2.1]{AbramovichMarcusWise}. For smooth pairs, the log structure on $X$ is free of rank $1$, and that on $V$ is trivial. For acceptable degenerations, the log structure on $X$ is free of rank $2$, with that on $V$ free of rank $1$. 

%

The datum $C \to U$ can be regarded as a map $U \to \cN_{g,V}$ of log stacks, for  $\cN_g$ as above, and then
we seek a map 
\[
 C \to X
\]
of log stacks over $V$. The log structures on $C$ and $X$ over $V$ can be interpreted as morphisms $\rC, \rX \to \fLog_{V,f}$, and then we need a compatible map $\rC \to \fLog_{X,f}= \fLog^1_{V,f}\by_{\fLog_{V,f}}\rX$. Explicitly, this means that the  diagram
\[
 \xymatrix{\rC \ar[r] \ar[dr] & \fLog^1_{V,f}\by_{\fLog_{V,f}}\rX \ar^{\delta_0^* \circ \pr_1}[d]\\
& \fLog_{V,f}
}
\]
must commute.

Now the universal property of $\cD_{g,V}$ gives 
$
\rC= \rU\by_{\fLog_{\cN_{g,V},f}}\mathring{\cD}_{g,V}, 
$
 so we seek a map
\[
   \rU\by_{\fLog_{\cN_{g,V},f}}\mathring{\cD}_{g,V}  \to \fLog_{X,f}
\]
over $\fLog_{V,f}$.

Thus the stack describing this problem is the $\Hom$-stack
\[ 
\mathring{\cN}_{g,V}(X):=\cHom_{\fLog_{\cN_{g,V},f}}(\mathring{\cD}_{g,V}, \fLog_{X,f}\by_{\fLog_{V,f}}\fLog_{\cN_{g,V},f}), 
\] 
where the map $\fLog_{\cN,f} \to \fLog_{V,f}$ is just  given by applying $\fLog$ to the log morphism $\cN_{g,V} \to V$.  

The map $\mathring{\cN}_{g,V}(X) \to \fLog_{\cN_{g,V},f}$ gives us a log structure on the  stack $\mathring{\cN}_{g,V}(X) $  over $\cN_{g,V}$, and we denote the resulting log stack $\cN_{g,V}(X)$

Since stability is a deformation-invariant condition, the log stack of stable maps is an open substack of $\cN_{g,V}(X)$.

\subsubsection{Derived moduli and derived tangent space}

We also wish to understand the derived moduli stack associated to this moduli problem. This should parametrise the $\infty$-groupoid of  commutative diagrams
\[
 \begin{CD} 
  C @>>> X \\
@VVV @VVV\\
U@>>> V.     
 \end{CD}
\]
of derived log stacks, where $C \to U$ ranges over proper log-smooth log curves of genus $g$. Reasoning exactly as above, we see that this problem is  
governed by the derived moduli stack
\[
 \oR\mathring{\cN}_{g,V}(X):= \oR\cHom_{\oR\fLog_{\cN_{g,V}}}(\oR\mathring{\cD}_{g,V}, \oR\fLog_{X}\by^h_{\oR\fLog_{V}}\oR\fLog_{\cN_{g,V}}),
\]
where $\oR\mathring{\cD}_{g,V}:= \bar{W}\uline{ \mathring{\cD}_{g,V}} $ is the derived stackification of the formally smooth stack $\mathring{\cD}_{g,V}$ over $\rV$, and similarly  $\oR\fLog_{\cN_{g,V}}:= \oR\fLog_{\oR\cN_{g,V}} =\bar{W}\uline{ \fLog_{\cN_{g,V},f}}$. 

When $X$ and $V$ are underived log stacks, then  by construction we have
\begin{eqnarray*}
 \pi^0\oR\mathring{\cN}_{g,V}(X)&\simeq& \cHom_{\fLog_{\cN_{g,V},f}}(\mathring{\cD}_{g,V}, \pi^0(\oR\fLog_{X}\by^h_{\oR\fLog_{V}}\oR\fLog_{\cN_{g,V}}))\\
&\simeq& \cHom_{\fLog_{\cN_{g,V},f}}(\mathring{\cD}_{g,V}, \fLog_{X,f}\by_{\fLog_{V,f}}\fLog_{\cN_{g,V},f})\\
&=& \mathring{\cN}_{g,V}(X),
\end{eqnarray*}
the first equivalence coming because $ \oR\mathring{\cD}_{g,V} \to \oR\fLog_{\cN_{g,V}}$ is fibred in underived schemes (curves, in fact). Thus $\oR\mathring{\cN}_{g,V}(X)$ is a derived enhancement of $ \mathring{\cN}_{g,V}(X)$.

Now, properties of derived $\Hom$-stacks give the logarithmic tangent complex  $T_{[(C,m)]}(\oR {\cN}_{g,V}(X)/\oR\cN_{g,V},M )$ of $\oR {\cN}_{g,V}(X)$ over $\oR\cN_{g,V}$
at a map $m \co \rC \to \fLog_X$ as 
\begin{eqnarray*}
  T_{[(C,m)]}(\oR\mathring{\cN}_{g,V}(X)/\oR\fLog_{\cN_{g,V}}, M )&\simeq& \oR\HHom_{\O_{\oR\fLog_{X}}}( \bL^{\oR\fLog_{X}/ \oR\fLog_{V}}, \oR m_* \O_{\rC}\ten M)\\
&\simeq& \oR\HHom_{\O_{\rX}}( \bL^{X/V}, \oR m_* \O_{\rC}\ten M).
\end{eqnarray*}
 Here, we also write $m$ for the induced morphism $\rC \to \rX$, and the second equivalence comes from Remark \ref{triangle} and adjunction.

Substituting for both $ T_{[(C,m)]}(\oR {\cN}_{g,V}(X)/\oR\cN_{g,V},M ) $ and $T_{[C]}( \oR\cN_{g,V}/V,M) $ then gives an exact triangle
\[
\to \oR\HHom_{\O_{\rX}}( \bL^{X/ V}, \oR m_* \O_{\rC}\ten M) \to  T_{[(C,m)]}(\oR {\cN}_{g,V}(X)/V,M ) \to \oR\Gamma(\rC, \sT^{\log}_{C/U}\ten M)[1]
\]

Establishing representability of an open substack of this derived stack (such as that parametrising stable maps) is then just a matter of verifying the finiteness and formal effectiveness conditions of \cite[Theorem \ref{drep-lurierep3}]{drep}, since homotopy-homogeneity follows automatically from our construction, and a derived stack is necessarily a homotopy-preserving hypersheaf.

When $X$ is moreover log smooth over $V$, as happens in all the cases of interest in \cite[\S 2.1]{AbramovichMarcusWise}, $ T_{[(C,m)]}(\oR {\cN}_{g,V}(X)/\oR\cN_{g,V},M ) $ reduces to
\[
 \oR\HHom_{\O_{\rX}}( \Omega^{\log}_{X/V}, \oR m_* \O_{\rC}\ten M)= \oR \Gamma(\rX, \sT^{\log}_{X/V}\ten \oR m_* \O_{\rC}\ten M).
\]
This recovers the relative obstruction theory of  \cite[\S 3.2.3]{AbramovichMarcusWise} as the first cohomology group, and gives the desired virtual relative tangent space on $\cN_{g,V}(X) $.

\bibliographystyle{alphanum}
\bibliography{references}
\end{document}